\theoremstyle{plain}
\newtheorem{thm}{Theorem}
\newtheorem*{thm*}{Theorem}
\newtheorem*{mainthm*}{Main Theorem}
\newtheorem*{mainlem*}{Main Lemma}
\newtheorem{lem}[thm]{Lemma} \newtheorem*{lem*}{Lemma}
 \newtheorem*{claim*}{Claim}
\newtheorem{cor}[thm]{Corollary} \newtheorem*{cor*}{Corollary}
\newtheorem{prop}[thm]{Proposition} \newtheorem*{prop*}{Proposition}
\theoremstyle{definition}
 \newtheorem*{defn*}{Definition}
\theoremstyle{remark}
 \newtheorem*{rem*}{Remark}
 \newtheorem*{example*}{Example}
 \newtheorem*{conj*}{Conjecture}
\newtheorem{question}[thm]{Question} \newtheorem*{question*}{Question}
\title{The distinguishing index of graphs with infinite minimum degree}
\author[1]{Marcin Stawiski}
\author[2]{Trevor M.\ Wilson}
\affil[1]{AGH University of Science and Technology\\ Faculty of Applied Mathematics, \protect\\al. Mickiewicza 30, 30-059 Krakow, Poland}
\affil[2]{Miami University\\Department of Mathematics\\123 Bachelor Hall\\301 S.\ Patterson Ave.\\Oxford, Ohio 45056, USA}
\begin{document}

\maketitle

\begin{abstract}
 The distinguishing index $D'(G)$ of a graph $G$ is the least number of colors necessary to obtain an edge coloring of $G$ that is preserved only by the trivial automorphism. We show that if $G$ is a connected $\alpha$-regular graph for some infinite cardinal $\alpha$ then $D'(G) \le 2$, proving a conjecture of Lehner, Pil\'{s}niak, and Stawiski. We also show that if $G$ is a graph with infinite minimum degree and at most $2^\alpha$ vertices of degree $\alpha$ for every infinite cardinal $\alpha$, then $D'(G) \le 3$. In particular, $D'(G) \le 3$ if $G$ has infinite minimum degree and order at most $2^{\aleph_0}$.
\end{abstract}

In this note we consider undirected graphs without loops or multiple edges. An \emph{edge coloring} of a graph is a function from its set of edges to a set of elements called colors. (We do not require edge colorings to be proper.) We say that an automorphism of a graph $G$ \emph{preserves} an edge coloring of $G$ if it maps every edge to an edge of the same color. The \emph{distinguishing index} $D'(G)$ of a graph $G$ is the least cardinal number $d$ such that $G$ has an edge coloring with $d$ colors that is preserved only by the trivial automorphism. The distinguishing index is defined only for graphs with no isolated edges and at most one isolated vertex; all graphs that we consider will satisfy this property.

Because $D'(G) = 1$ for $G \neq K_1$ simply means that $G$ is asymmetric (lacks nontrivial automorphisms), the main question about the distinguishing index of any given graph $G$ is whether $D'(G) \le 2$, in other words whether the edges of $G$ can be colored with two colors in a way that ``breaks'' every nontrivial automorphism. Note that this problem is related (but not equivalent) to the problem of the existence of an asymmetric spanning subgraph which may be interesting on its own.

The distinguishing index was introduced by Kalinowski and Pil{\'s}niak \cite{kalinowski2015distinguishing}; the corresponding notion for vertex colorings called the \emph{distinguishing number} $D(G)$ had been previously introduced by Babai \cite{babai1977asymmetric} in 1977 under the name \emph{asymmetric colorings}. Distinguishing colorings play an important role in his study of the graph isomorphism problem which includes  the proof of the existence of a quasi-polynomial algorithm for this problem (see \cite{babaiisomorphism}).
Albertson and Collins \cite{albertson1996symmetry} reintroduced the concept of asymmetric colorings in 1996 and coined the term \emph{distinguishing coloring}. 

In this note we will consider the distinguishing index of graphs with infinite minimum degree, possibly uncountable. Although most previous work on the distinguishing index has focused on locally finite graphs, graphs with infinite degrees have also been considered in \cite{babai1977asymmetric,broere2015distinguishing,broere2016distinguishing,lehner2017breaking,lehner2020bound,lehner2020symmetries}. Note that the very first result on the distinguishing colorings is the theorem of Babai \cite{babai1977asymmetric} who showed that every connected tree of infinite degree has a asymmetric spanning forest. In this paper we generalize this result to every connected regular graph of infinite degree.


Broere and Pil{\'s}niak \cite{broere2015distinguishing} showed that $D'(G) \le 2$ for every connected $\aleph_0$-regular graph $G$ and more generally for every countable graph with what they called ``good degrees''. A similar result for much larger graphs was obtained by Lehner, Pil\'{s}niak, and Stawiski \cite[Theorem 5]{lehner2020bound}, who showed that $D'(G) \le 2$ for every connected $\alpha$-regular graph $G$ where $\alpha$ is a fixed point of the aleph sequence of cardinals, meaning $\alpha = \aleph_\alpha$.\footnote{The aleph sequence is the increasing enumeration of all infinite cardinals, beginning with $\aleph_0 = |\mathbb{N}|$.  Further details can be found in a set theory reference such as Jech \cite[Chapter 3]{jec2002}.} They conjectured that $D'(G) \le 2$ holds more generally if $G$ is a connected $\alpha$-regular graph for any infinite cardinal $\alpha$ whatsoever. We will prove this conjecture as Theorem \ref{thm:regular-connected} below. Our method is necessarily quite different from the previous special cases, which relied on the existence of $\alpha$ many cardinals less than $\alpha$. We will also show, in Theorem \ref{thm:inf-min-deg}, that $D'(G) \le 3$ for graphs of infinite minimum degree that are not necessarily regular but satisfy a certain condition on the number of vertices of each degree. Note that a version of Theorem \ref{thm:regular-connected} for (both finite and infinite) connected locally finite graphs of order at least 6 was recently proved by Kwaśny and Stawiski \cite{kwasnystawiski2022}. Combining this result with Theorem \ref{thm:regular-connected} we obtained that the distinguishing index of a connected regular graph of order at least 6 is at most 2.

Before proceeding, we briefly review some basic terminology that is mostly standard. The set of vertices and the set of edges of a graph $G$ will be written as $V(G)$ and $E(G)$ respectively. The \emph{degree} of a vertex is its number of neighbors (adjacent vertices) and the \emph{minimum degree} of a graph is the least degree of any of its vertices. A graph is \emph{$\alpha$-regular}, where $\alpha$ is a cardinal, if each of its vertices has degree $\alpha$. A \emph{forest} is an acyclic graph and a \emph{tree} is a connected forest. A spanning subgraph of $G$ is a subgraph containing all vertices of $G$, and a spanning tree (or forest) is a spanning subgraph that is a tree (or forest).  Since we do not require spanning forests to be maximal, a tree may have a disconnected spanning forest, and a forest may be spanned by a smaller forest.

The following lemma turns out to be useful for investigating the distinguishing index of graphs of infinite minimum degree. Theorem \ref{thm:regular-connected} will follow easily from a special case of it, whereas Theorem \ref{thm:inf-min-deg} will require some additional work.
To prove the lemma we will need a theorem of Babai \cite{babai1977asymmetric}, namely that for every infinite cardinal $\alpha$ there is a family of $2^\alpha$ pairwise non-isomorphic asymmetric trees of order $\alpha$.
Actually Babai  proved more specifically that such a family can be constructed in which each tree has vertices of two prescribed degrees only, such as 1 and $\alpha$. This more specific statement will not be needed here.

\begin{lem}\label{lem:main-lemma}
 Let $(T_i)_{i\in I}$ be a family of trees, each of which is regular of infinite degree, and at most $2^\alpha$ of which are $\alpha$-regular for any particular infinite cardinal $\alpha$.  Then the disjoint union $\bigcup_{i \in I} T_i$ has an asymmetric spanning forest with no isolated vertices.
\end{lem}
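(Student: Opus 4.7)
The plan is to use Babai's theorem to equip each tree $T_i$ with a distinct asymmetric ``fingerprint'' tree $B_i$ of order $\alpha_i$ (where $\alpha_i$ is the regularity of $T_i$), and to assemble a spanning subforest $F \subseteq \bigcup_{i \in I} T_i$ whose connected components are pairwise non-isomorphic asymmetric trees; since $\mathrm{Aut}(F) = \{\mathrm{id}\}$ precisely when all components are pairwise non-isomorphic and each is asymmetric, this will yield the required asymmetric spanning forest with no isolated vertices.

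First I would partition $I = \bigsqcup_\alpha I_\alpha$ by regularity, so $|I_\alpha| \le 2^\alpha$, and use Babai's theorem to inject each $I_\alpha$ into the family of $2^\alpha$ pairwise non-isomorphic asymmetric trees of order $\alpha$, assigning each $i \in I_\alpha$ a distinct asymmetric tree $B_i$. Across different regularities the $B_i$'s are non-isomorphic by cardinality, so all the $B_i$'s are pairwise non-isomorphic. For each $i$, I would then embed $B_i$ as a subtree $B_i^* \subseteq T_i$ by a routine greedy argument: since $T_i$ is the $\alpha_i$-regular tree on $\alpha_i$ vertices, at each stage of the embedding the current vertex has $\alpha_i$ neighbors in $T_i$ and fewer than $\alpha_i$ have been used, so an unused neighbor is always available. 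Because $T_i$ is a tree, the induced subgraph on the image has no extra edges, so the image really is a copy of $B_i$. I would designate $B_i^*$ as one connected component of the spanning subforest $F_i$ by including the edges of $B_i^*$ in $F_i$ and excluding all edges of $T_i$ crossing between $V(B_i^*)$ and $V(T_i)\setminus V(B_i^*)$.

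The remaining vertices of $T_i$ form hanging subtrees off $B_i^*$, and a short calculation shows that each hanging subtree is itself $\alpha_i$-regular (every non-root vertex has all of its $\alpha_i$ neighbors inside, and the root has $\alpha_i - 1 = \alpha_i$ neighbors inside), hence isomorphic to the $\alpha_i$-regular tree. I would recursively apply the same fingerprint-and-excise construction to each hanging subtree, drawing fresh asymmetric fingerprints from the Babai supply; the total demand per regularity level is at most $|I_\alpha| \cdot \alpha = 2^\alpha$, matching the supply. Taking $F = \bigsqcup_i F_i$, the connected components of $F$ are pairwise non-isomorphic asymmetric trees, so $\mathrm{Aut}(F) = \{\mathrm{id}\}$, giving the lemma.

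The main obstacle is the recursive outside decomposition: one must arrange the $\omega$-many nested levels of hanging subtrees so that they collectively cover all of $V(T_i)$, while maintaining globally consistent bookkeeping of isomorphism types across all $T_i$'s and all levels of the recursion. The rich Babai supply makes the bookkeeping feasible, but designing the specific partition so that it is a bona fide partition of each $V(T_i)$---with no uncovered pockets, no overlap, and a recursion that terminates at the right places---requires careful control, and is the subtle technical point of the argument.
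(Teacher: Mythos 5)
Your overall plan---reduce to a single infinite cardinal $\alpha$ and decompose each $\alpha$-regular tree into pairwise non-isomorphic asymmetric Babai trees of order $\alpha$---is in the same spirit as the paper, and most of the individual steps you sketch are fine: the greedy embedding of a tree of order $\alpha$ into the $\alpha$-regular tree works (and in a tree the image is automatically induced, as you observe, since an extra edge would close a cycle); each component of $T_i$ minus an induced subtree attaches to it by exactly one edge and is therefore again $\alpha$-regular; and the supply count $2^\alpha\cdot\alpha=2^\alpha$ is enough fingerprints. The genuine gap is exactly the step you flag at the end and then leave unresolved: nothing in your construction guarantees that the $\omega$ levels of excised fingerprints exhaust $V(T_i)$. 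A fixed vertex $v$ can be dodged by the greedy embedding at every level and remain inside a hanging subtree forever, in which case it is never placed into any component of $F$ (or ends up isolated), and the lemma requires a spanning forest with no isolated vertices. Since this covering property is where the real content of the decomposition lies, calling it ``the subtle technical point'' without an argument leaves the proof incomplete.

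The gap is repairable within your scheme: always start the fingerprint embedding at the attachment vertex (the root) of the hanging subtree being processed. Then for any vertex $v$, the distance from $v$ to the root of the hanging subtree currently containing it strictly decreases at each level (the new root lies on the old path from $v$ to the old root), so $v$ is absorbed after finitely many levels and the pieces really partition $V(T_i)$. It is instructive to compare this with the paper's route, which avoids the covering problem altogether by working in the opposite direction: instead of carving the forest out of $T_i$, it takes the abstract forest $F=\bigcup_{j\in J}A_j$ with $|J|=\alpha$, groups its components into levels $F_0,F_1,F_2,\dots$ (with $F_0$ a single tree and each later level having $\alpha$ components), and adds edges between consecutive levels so that every vertex of $F_n$ gains $\alpha$ new edges while every component of $F_{n+1}$ receives exactly one. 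The resulting graph is an $\alpha$-regular tree, hence isomorphic to $T_i$ because any two $\alpha$-regular trees are isomorphic, so $F$ is automatically (isomorphic to) a spanning forest of $T_i$; no exhaustion argument and no per-level bookkeeping inside $T_i$ are needed.
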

\begin{proof}
 It suffices to show that for each infinite cardinal $\alpha$, the disjoint union of all the $\alpha$-regular trees in the family has an asymmetric spanning forest in which each component has order $\alpha$. Then we may take the disjoint union of these asymmetric forests for all infinite cardinals $\alpha$ to obtain the desired asymmetric forest spanning all of $\bigcup_{i \in I} T_i$, since an automorphism cannot interchange components of two different orders.
 
 We may therefore assume without loss of generality that each tree $T_i$ in the family is $\alpha$-regular for the same fixed infinite cardinal $\alpha$, and that the index set $I$ has cardinality at most $2^\alpha$. Under this assumption, our task is to show that the disjoint union $\bigcup_{i \in I} T_i$ has an asymmetric spanning forest in which each component has order $\alpha$.
 
 There is a family of $2^\alpha$ pairwise non-isomorphic asymmetric trees of order $\alpha$ by Babai \cite{babai1977asymmetric}. (For example, there is such a family of trees having vertices of degrees 1 and $\alpha$ only.) Fixing a set $J$ of cardinality $\alpha$ to use as another index set, we have $|I \times J| \le 2^\alpha \cdot \alpha = 2^\alpha$, so there is a family $(A_{i,j})_{i \in I, j \in J}$ of pairwise non-isomorphic asymmetric trees of order $\alpha$. The disjoint union $\bigcup_{i \in I, j \in J} A_{i,j}$ of this family is an asymmetric forest in which each component has order $\alpha$, so it suffices to find a spanning forest of $\bigcup_{i \in I} T_i$ isomorphic to $\bigcup_{i \in I, j \in J} A_{i,j}$. Treating each $i \in I$ separately, we will find a spanning forest of $T_i$ isomorphic to $\bigcup_{j \in J} A_{i,j}$.
 
 Let us consider a fixed index $i \in I$.  Omitting this index from our notation, we can describe our task more simply as follows. Given an $\alpha$-regular tree $T$ and a family $(A_j)_{j \in J}$ of trees of order $\alpha$, where $J$ has cardinality $\alpha$, we want to find a spanning forest of $T$ isomorphic to $\bigcup_{j \in J} A_j$. (The remaining argument will not use the assumption that the trees $A_j$ are pairwise non-isomorphic and asymmetric; they can be any trees of order $\alpha$.)
  
 To show that the forest $F = \bigcup_{j \in J} A_j$ is isomorphic to a spanning forest of $T$, we will add some edges to $F$ to obtain an $\alpha$-regular tree $T'$, which must be isomorphic to $T$ because any two $\alpha$-regular trees are isomorphic. We first write the index set $J$ as a disjoint union of sets $J_n$, $n \in \mathbb{N}$ where $J_0$ is a singleton set $\{j_0\}$ and each $J_{n+1}$ has cardinality $\alpha$. Then we can rewrite the forest $F= \bigcup_{j \in J} A_j$ as the following disjoint union of subforests: $F = \bigcup_{n \in \mathbb{N}} F_n$ where $F_n = \bigcup_{j \in J_n} A_j$.
 
 Note that each subforest $F_n$ has order $\alpha$, and that $F_0$ is just a single tree (namely $A_{j_0}$) whereas each $F_{n+1}$ has $\alpha$ components.  For each $n \in \mathbb{N}$, we may add to $F$ a set $E_n$ of $\alpha$ edges between $F_n$ and $F_{n+1}$ in such a way that
 \begin{enumerate}
  \item  every vertex of $F_{n}$ is incident with $\alpha$ edges in $E_{n}$, and
  \item every component of $F_{n+1}$ is incident with one and only one edge in $E_n$.
 \end{enumerate}
 Let $T'$ be the graph obtained by adding the edges in $\bigcup_{n \in \mathbb{N}} E_n$ to the forest $F$. By (1), the graph $T'$ is $\alpha$-regular.  By the ``only one'' part of (2) and the fact that each $F_n$ is acyclic, $T'$ is acyclic.  By the ``(at least) one'' part of (2) and the fact that $F_0 = A_{j_0}$ is connected, $T'$ is connected.  Therefore $T'$ is an $\alpha$-regular tree spanned by $F$ as desired.
\end{proof}

Our applications of Lemma \ref{lem:main-lemma} will only use the fact that $\bigcup_{i \in I} T_i$ has an asymmetric spanning subgraph with no isolated vertices, and not the fact that this subgraph is a forest.

The lemma can be applied in the special case of a single regular tree $T_i$ to prove the conjecture of Lehner, Pil\'{s}niak, and Stawiski \cite[Conjecture 6]{lehner2020bound}.
The only other ingredient we will need is a result of Andersen and Thomassen \cite[Theorem 2$'$]{andersen1980cover}, which says that for every infinite cardinal $\alpha$, every connected $\alpha$-regular graph has an $\alpha$-regular spanning tree.

\begin{thm}\label{thm:regular-connected}
   Let $G$ be a connected $\alpha$-regular graph where $\alpha$ is an infinite cardinal.  Then
   $D'(G) \le 2$.
\end{thm}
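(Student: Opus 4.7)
The plan is to combine two black boxes: the Andersen–Thomassen spanning tree theorem and Lemma \ref{lem:main-lemma} applied to a single tree. Once these are in hand, the distinguishing 2-coloring arises from the standard device of coloring by membership in a chosen spanning subgraph.

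First, I would invoke Andersen and Thomassen's result to obtain an $\alpha$-regular spanning tree $T$ of $G$. Next, I would apply Lemma \ref{lem:main-lemma} to the singleton family $\{T\}$: the hypothesis is trivially satisfied, since there is exactly one $\alpha$-regular tree (and no trees regular of any other infinite degree) and $1 \le 2^\alpha$. This produces an asymmetric spanning forest $F$ of $T$ with no isolated vertices. Because $T$ spans $V(G)$ and $F$ spans $V(T) = V(G)$, the forest $F$ is in particular a spanning subgraph of $G$, and every vertex of $G$ is incident to at least one edge of $F$.

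Now define an edge coloring $c \colon E(G) \to \{0,1\}$ by $c(e) = 0$ if $e \in E(F)$ and $c(e) = 1$ otherwise. Suppose $\varphi$ is an automorphism of $G$ preserving $c$. Then $\varphi$ maps edges in $E(F)$ to edges in $E(F)$, so the vertex map $\varphi$ together with its induced action on $E(F)$ is a graph automorphism of $F$. Since $F$ is asymmetric, $\varphi$ fixes every vertex of $F$. Because $F$ has no isolated vertices and spans $G$, we have $V(F) = V(G)$, so $\varphi$ is the identity on $V(G)$. Hence $c$ is a distinguishing edge coloring with two colors, giving $D'(G) \le 2$.

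All the genuine work has been discharged by Lemma \ref{lem:main-lemma} and the Andersen–Thomassen theorem, so the proof of Theorem \ref{thm:regular-connected} itself is essentially a one-line assembly. The only delicate point worth verifying explicitly is that the lemma's conclusion provides a forest \emph{with no isolated vertices}, since without that feature an automorphism of $G$ could act nontrivially on vertices untouched by $F$; the ``no isolated vertices'' clause is precisely what rules this out and lets asymmetry of $F$ upgrade to triviality of $\varphi$ on all of $V(G)$.
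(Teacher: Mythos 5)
Your proof is correct and follows essentially the same route as the paper: Andersen--Thomassen gives an $\alpha$-regular spanning tree, Lemma \ref{lem:main-lemma} (applied to the singleton family) gives an asymmetric spanning forest $F$, and the red/blue coloring by membership in $E(F)$ breaks all nontrivial automorphisms. One minor remark: since $F$ is a \emph{spanning} forest of $G$, we have $V(F)=V(G)$ automatically, so asymmetry of $F$ alone already forces a color-preserving automorphism to be the identity; the ``no isolated vertices'' clause of the lemma is really needed later, in the proof of Theorem \ref{thm:inf-min-deg}, where $F$ does not span $G$.
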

\begin{proof}
 The graph $G$ has an $\alpha$-regular spanning tree, which in turn has an asymmetric spanning forest $F$ by Lemma \ref{lem:main-lemma}. Coloring the edges of $F$ red and the other edges of $G$ blue, we obtain an edge coloring of $G$ that is preserved only by the trivial automorphism.
\end{proof}

The following proposition shows that the hypothesis of connectedness can be removed if we impose a certain limitation on the order, or equivalently for $\alpha$-regular graphs, a limitation on the number of components. 
To see that some such limitation is necessary, letting $\alpha$ be an infinite cardinal and denoting the least cardinal greater than $2^\alpha$ by $(2^{\alpha})^+$, observe that the disjoint union of $(2^{\alpha})^+$ copies of any graph of order $\alpha$ has distinguishing index $(2^\alpha)^+$ because any edge coloring with at most $2^\alpha$ colors admits an automorphism interchanging two components. 

\begin{prop}\label{prop:regular-disconnected}
 Let $G$ be an $\alpha$-regular graph of order at most $2^\alpha$ where $\alpha$ is an infinite cardinal.  Then $D'(G) \le 2$.
\end{prop}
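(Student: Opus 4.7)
The plan is to adapt the proof of Theorem~\ref{thm:regular-connected} by applying Lemma~\ref{lem:main-lemma} to a whole family of $\alpha$-regular trees, one per component of $G$, instead of to a single tree. The connected theorem used only the simplest case of the lemma; here we need the genuine disjoint-union version, and the cardinality hypothesis $|V(G)| \le 2^\alpha$ is exactly what is required to feed the lemma.

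First I would apply the Andersen--Thomassen theorem componentwise: for each connected component $C$ of $G$, which is itself a connected $\alpha$-regular graph, choose an $\alpha$-regular spanning tree $T_C \subseteq C$. Writing $I$ for the set of components and $S = \bigcup_{C \in I} T_C$, we obtain a spanning subgraph $S$ of $G$ whose components are the $\alpha$-regular trees $(T_C)_{C \in I}$. Because each component contains at least one vertex and $|V(G)| \le 2^\alpha$, we have $|I| \le 2^\alpha$.

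Next I would verify the hypothesis of Lemma~\ref{lem:main-lemma} for the family $(T_C)_{C \in I}$. For $\beta = \alpha$ the number of $\beta$-regular trees in the family is at most $|I| \le 2^\alpha = 2^\beta$; for any other infinite cardinal $\beta$ there are no $\beta$-regular trees in the family, so the condition is vacuous. Lemma~\ref{lem:main-lemma} therefore provides an asymmetric spanning forest $F$ of $S$ with no isolated vertices. Since $V(F) = V(S) = V(G)$, the forest $F$ is a spanning subgraph of $G$ covering every vertex by at least one edge.

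Finally I would color the edges of $F$ red and the remaining edges of $G$ blue. Any automorphism $\sigma$ of $G$ that preserves this coloring maps red edges to red edges, hence maps $F$ to itself as a subgraph; because $F$ has no isolated vertices its vertex set is all of $V(G)$, so $\sigma$ restricts to an automorphism of $F$, which must be the identity since $F$ is asymmetric. Thus $D'(G) \le 2$. The only step with any subtlety is the cardinality check that $|I| \le 2^\alpha$, which is immediate; the rest is a direct transcription of the connected argument.
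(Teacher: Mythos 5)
Your proposal is correct and follows essentially the same route as the paper: choose an $\alpha$-regular spanning tree in each component via Andersen--Thomassen, note that the order bound gives at most $2^\alpha$ components so Lemma~\ref{lem:main-lemma} applies to the family, and then use the resulting asymmetric spanning forest with the red/blue coloring exactly as in Theorem~\ref{thm:regular-connected}.
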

\begin{proof}
 The graph $G$ has at most $2^\alpha$ components and we may choose an $\alpha$-regular spanning tree for each component. Applying Lemma \ref{lem:main-lemma} to this family of trees yields an asymmetric spanning forest $F$ of $G$, and as before we may color the edges of $F$ red and the other edges of $G$ blue to obtain an edge coloring of $G$ that is preserved only by the trivial automorphism.
\end{proof}

Note that in the proposition above we obtained even a stronger result. Namely each $\alpha$-regular graph of order at most $2^\alpha$ where $\alpha$ is an infinite cardinal has an asymmetric spanning forest. It also generalizes the mentioned result of Babai which showed that each infinite regular tree has an asymmetric spanning forest. We showed it for every connected regular graph of infinite degree.

The following theorem shows that the hypothesis of regularity can also be removed if we allow an additional color and we again impose a certain limitation, this time on the number of vertices of each degree. To see that some such limitation is necessary even for connected graphs, observe that the complete bipartite graph $K_{\alpha, (2^\alpha)^+}$ has distinguishing index $(2^\alpha)^+$ because any edge coloring with at most $2^\alpha$ colors admits an automorphism interchanging two vertices of degree $\alpha$.

To prove the theorem we will need another result of Andersen and Thomassen \cite[Theorem 2]{andersen1980cover}, which says that every graph $G$ of infinite minimum degree $\delta$ contains (as a subgraph, not necessarily induced) a $\delta$-regular tree.  We will also need the fact that such a tree may be taken to include any given vertex of $G$,
which is apparent from their proof.

\begin{thm}\label{thm:inf-min-deg}\footnote{We note that this theorem is due to second author solely.}
 Let $G$ be a graph with infinite minimum degree and at most $2^\alpha$ vertices of degree $\alpha$ for every infinite cardinal $\alpha$.  Then $D'(G) \le 3$.
\end{thm}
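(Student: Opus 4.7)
The plan is to reduce the theorem to Lemma~\ref{lem:main-lemma}. Specifically, I aim to construct a spanning subgraph $H$ of $G$ that decomposes as a disjoint union of trees $(T_i)_{i\in I}$, each regular of some infinite degree, with at most $2^\alpha$ of them being $\alpha$-regular for every infinite cardinal~$\alpha$. Granted such an $H$, Lemma~\ref{lem:main-lemma} delivers an asymmetric spanning forest $F$ of $\bigcup_{i\in I}T_i$ without isolated vertices; since $V(H)=V(G)$, $F$ is also a spanning forest of $G$. I then use the three colors as follows: color $1$ on $E(F)$, color $2$ on $E(H)\setminus E(F)$, and color $3$ on $E(G)\setminus E(H)$. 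Any automorphism of $G$ preserving this coloring preserves the color-$1$ subgraph $F$, and therefore restricts to an automorphism of~$F$; since $V(F)=V(G)$ and $F$ is asymmetric, the automorphism must be the identity.

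To construct $H$, I would carry out a transfinite recursion on the infinite cardinals $\alpha$ that occur as degrees in $G$, processing them from largest to smallest. At stage~$\alpha$, I consider the vertices of degree $\alpha$ (in $G$) that remain unassigned, and apply Andersen-Thomassen to a suitable subgraph of the residual graph in order to cover each such vertex by a pairwise vertex-disjoint $\alpha$-regular subtree of $G$. The hypothesis $|V_\alpha|\le 2^\alpha$ then bounds the number of $\alpha$-regular trees produced at stage~$\alpha$ by $2^\alpha$, which matches the cardinality condition of Lemma~\ref{lem:main-lemma}.

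The main obstacle lies in making each step of this recursion go through. A vertex $v$ of degree $\alpha$ in $G$ may, in the residual subgraph at stage~$\alpha$, have too many neighbors of degree strictly less than $\alpha$ (or too many neighbors already used in earlier-stage trees) to support an $\alpha$-regular subtree through it via Andersen-Thomassen. Such exceptional vertices would have to be deferred to a later, smaller cardinal stage and absorbed there into a tree of strictly smaller regular degree. The key technical content, therefore, is a bookkeeping argument showing that these deferrals can be arranged so that (i)~every vertex is eventually placed in some $T_i$, and (ii)~the $2^\alpha$ cap on $\alpha$-regular trees is preserved for every infinite $\alpha$; the degree-count hypothesis $|V_\beta|\le 2^\beta$ for all infinite $\beta$ is precisely what is needed to maintain these two conditions simultaneously.
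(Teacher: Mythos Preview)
Your plan has a genuine gap: the spanning subgraph $H$ you want---a vertex-disjoint union of infinite regular trees covering all of $V(G)$---need not exist. Take $G = K_{\alpha,\beta}$ with infinite cardinals $\alpha < \beta \le 2^\alpha$, which satisfies the hypothesis of the theorem. Any $\gamma$-regular tree $T$ contained in $G$ (with $\gamma$ infinite) is bipartite between the two sides $A$ and $B$ (of sizes $\alpha$ and $\beta$), and every vertex of $T$ on the $B$-side has $\gamma$ neighbors in $T\cap A$; hence $\gamma \le |T\cap A| \le \alpha$, and since a $\gamma$-regular tree has order $\gamma$, also $|T\cap B| \le \gamma \le \alpha$. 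Because the trees $T_i$ in your family are pairwise disjoint and each meets $A$, there are at most $|A|=\alpha$ of them; consequently $\bigl|\bigcup_i (T_i\cap B)\bigr| \le \alpha\cdot\alpha = \alpha < \beta$, so the family cannot cover $B$. No amount of bookkeeping or deferral in the transfinite recursion can rescue this: the obstruction is structural.

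There is a second red flag in your outline: if your $H$ existed, the three colors would be redundant. An automorphism preserving your coloring already preserves $E(F)$, and since $F$ would be an asymmetric spanning forest of $G$, two colors (edges of $F$ versus the rest) would suffice, giving $D'(G)\le 2$. But the paper explicitly leaves this open and observes that $K_{\alpha,\beta}$ has no asymmetric spanning forest at all, confirming that your route cannot succeed. The paper's proof instead takes a \emph{maximal} disjoint family of ``degree-realizing'' regular trees, which in general does not span $G$; Lemma~\ref{lem:main-lemma} produces an asymmetric forest $F$ on this partial vertex set, and the third color is then genuinely used in a separate Zorn's-lemma extension argument that pushes the coloring out from $V(F)$ to the remaining vertices, level by level, distinguishing vertices by their green-neighborhoods in the part already colored.
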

\begin{proof}
 Let us call a tree $T$ contained in $G$ \emph{degree-realizing} if for some infinite cardinal $\alpha$, $T$ is $\alpha$-regular and some vertex of $T$ has degree $\alpha$ in $G$. Note that $G$ contains at least one degree-realizing tree, because we may choose a vertex whose degree equals the minimum degree $\delta(G)$ of $G$ and then find a $\delta(G)$-regular tree containing that vertex.
 
 Let  $(T_i)_{i \in I}$ be a pairwise disjoint family of degree-realizing trees in $G$ that is maximal in the sense that $G$ contains no other degree-realizing tree disjoint from every $T_i$.  (Such a family exists by Zorn's lemma.) Note that for each infinite cardinal $\alpha$, at most $2^\alpha$ trees in this family are $\alpha$-regular.  This is because different $\alpha$-regular trees in the family are disjoint and degree-realizing, so they must contain different vertices of degree $\alpha$ in $G$, and at most $2^\alpha$ such vertices exist in $G$ by our hypothesis.
 It follows by Lemma \ref{lem:main-lemma} that $\bigcup_{i \in I} T_i$ has an asymmetric spanning forest $F$ with no isolated vertices.
 
 Color all edges of $F$ red and color any other edges of the induced subgraph of $G$ on $V(F)$ blue. Because $F$ might not span $G$, we may need to color some additional edges.  For these edges, we will use the colors green and blue. We will never color any more edges outside of $E(F)$ red and $F$ has no isolated vertices, so any automorphism of $G$ preserving our edge coloring will fix $V(F)$ setwise, and therefore also pointwise since $F$ is asymmetric. This will allow us to use $V(F)$ as a kind of foundation or anchor for the rest of our coloring.
 
 Let $H_0$ be the induced subgraph of $G$ on $V(F)$ and let $c_0$ be the edge coloring of $H_0$ that colors all edges of $F$ red and any other edges of $H_0$ blue, as mentioned previously. We define a \emph{good partial coloring} to be a pair $(H, c)$ where
 \begin{itemize}
  \item  $H$ is an induced subgraph of $G$ containing $H_0$,
  \item every automorphism of $G$ fixing $V(F)$ setwise also fixes $V(H)$ setwise,
  \item $c$ is an edge coloring of $H$ extending $c_0$ and using only green and blue outside $E(F)$, and
  \item every automorphism of $G$ fixing $V(F)$ (and therefore also $V(H)$) setwise and preserving $c$ fixes $V(H)$ pointwise.
 \end{itemize}
 Note that $(H_0, c_0)$ itself is a good partial coloring. Let $\mathcal{P}$ be the poset of all good partial colorings ordered by extension: $(H, c) \le (H', c')$ means $H'$ contains $H$ and $c' \restriction E(H) = c$. It's not hard to check that for any chain in $\mathcal{P}$, its union (in the obvious sense) is also in $\mathcal{P}$. Since the union is an upper bound for the chain, a maximal good partial coloring exists by Zorn's lemma.
 
 Let $(H,c)$ be a maximal good partial coloring. If $H = G$, then $c$ is an edge coloring of the entire graph and we are done: only the edges of $F$ are colored red and $F$ has no isolated vertices, so every automorphism of $G$ preserving $c$ must fix $V(F)$ setwise (in fact pointwise since $F$ is asymmetric) and  must therefore be the identity by the definition of ``good partial coloring''.
 
 Now we suppose that $H \ne G$ and argue toward a contradiction. Let $G \setminus H$ be the induced subgraph of $G$ on $V(G) \setminus V(H)$, let $\delta(G \setminus H)$ be the minimum degree of $G \setminus H$, let $X$ be the set of all vertices of $G \setminus H$ whose degree in $G \setminus H$ equals this minimum degree $\delta(G\setminus H)$, and let $H'$ be the induced subgraph of $G$ with vertex set $V(H') = V(H) \cup X$. We will define an edge coloring $c'$ of $H'$ extending $c$ such that $(H', c')$ is a good partial coloring, contradicting the maximality of $(H,c)$.
 
 Because $(H, c)$ is good, every automorphism of $G$ fixing $V(F)$ setwise fixes $V(H)$ setwise, and because $X$ and $H'$ were defined from $H$ in an invariant way, it follows that every automorphism of $G$ fixing $V(F)$ setwise also fixes $V(H')$ setwise.  Again because $(H,c)$ is good, every automorphism of $G$ fixing $V(F)$ (and therefore also $V(H)$) setwise and preserving $c$ fixes $V(H)$ pointwise. Therefore it will suffice to color the edges between $X$ and $H$ with the colors green and blue in a way to ensure that every color-preserving automorphism of $G$ fixing $V(H)$ pointwise must also fix $X$ pointwise. (Any edges between two vertices in $X$ will not be needed for this symmetry breaking, so let us arbitrarily choose to color them blue rather than green.)
 
 Let $\sim$ be the equivalence relation on $X$ where $u \sim v$ means that $u$ and $v$ have the same set of neighbors in $H$. Letting $Y \subseteq X$ be an equivalence class, note that every automorphism of $G$ fixing $V(H)$ pointwise must fix $Y$ setwise. To ensure that every color-preserving automorphism of $G$ fixing $V(H)$ pointwise must also fix $Y$ pointwise, it suffices to color the edges between $Y$ and $H$ in such a way that any two distinct vertices in $Y$ have different sets of green neighbors in $H$ (meaning neighbors in $H$ joined to them by green edges.) To show this is possible, we must show that $Y$ has at most $2^{\alpha_Y}$ vertices, where $\alpha_Y$ is defined as the number of neighbors in $H$ that each vertex in $Y$ has.
 
 The hypothesis of the theorem guarantees that $G$ has at most $2^{\alpha_Y}$ vertices of degree $\alpha_Y$, so it will suffice to show that every vertex $v \in Y$ has degree $\alpha_Y$ in $G$. If not, then since $\alpha_Y$ is its number of neighbors in $H$ and all degrees in $G$ are infinite, its degree in $G$ must equal its degree in $G \setminus H$, which is the minimum degree $\delta(G \setminus H)$ of $G \setminus H$ since $v \in X$. Letting $T$ be a $\delta(G \setminus H)$-regular tree in $G \setminus H$ containing $v$, which exists by the theorem of Andersen and Thomassen, this tree $T$ therefore counts as degree-realizing in $G$. However, this contradicts the maximality of the pairwise disjoint family $(T_i)_{i \in I}$ because $T$ is contained in $G\setminus H$ whereas each $T_i$ is contained in $F$ and therefore in $H$. This contradiction proves the theorem.
\end{proof}

Theorem \ref{thm:inf-min-deg} may leave some room for improvement.  Although we could look for ways to weaken the hypothesis on $G$, the most obvious question is whether or not three colors are actually needed:

\begin{question}\label{ques:two-colors}
  If $G$ is a graph with infinite minimum degree and at most $2^\alpha$ vertices of degree $\alpha$ for every infinite cardinal $\alpha$, must $D'(G) \le 2$?
\end{question}

Note that a positive answer to this question cannot be obtained by finding an asymmetric spanning forest of $G$ as in the proofs of Theorem \ref{thm:regular-connected} and Proposition \ref{prop:regular-disconnected}. This is because for any two infinite cardinals $\alpha < \beta$, the complete bipartite graph $K_{\alpha,\beta}$ (which satisfies the hypothesis of Theorem \ref{thm:inf-min-deg} if $\beta \le 2^\alpha$) has no asymmetric spanning forest. As remarked by Dirac and Thomassen \cite[p.~409]{dirac1974existence}, any spanning forest of $K_{\alpha,\beta}$ for infinite cardinals $\alpha < \beta$ must have either $\beta$ isolated vertices or $\beta$ leaves ($\beta$ of which must share a common neighbor) and in either case interchanging two of these vertices produces a nontrivial automorphism of the spanning forest.

A special case of Theorem \ref{thm:inf-min-deg} and Question \ref{ques:two-colors} may be worth noting separately:

\begin{cor}\label{cor:order-continuum}
 If $G$ be a graph with infinite minimum degree and order at most $2^{\aleph_0}$, then $D'(G) \le 3$.
\end{cor}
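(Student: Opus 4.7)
The plan is to deduce this corollary as an immediate consequence of Theorem \ref{thm:inf-min-deg}, since the hypothesis on the order of $G$ is strictly stronger than the per-cardinal vertex-count hypothesis appearing there. First I would verify the only nontrivial point, namely that whenever $|V(G)| \le 2^{\aleph_0}$ and $\alpha$ is an infinite cardinal, the number of vertices of $G$ of degree $\alpha$ is at most $2^\alpha$. Indeed, $\alpha \ge \aleph_0$ implies $2^\alpha \ge 2^{\aleph_0}$ by monotonicity of cardinal exponentiation, and the number of vertices of degree $\alpha$ is bounded above by the total order $|V(G)| \le 2^{\aleph_0} \le 2^\alpha$.

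Once that observation is in hand, the conclusion is immediate: the hypothesis of Theorem \ref{thm:inf-min-deg} is satisfied, so $D'(G) \le 3$. There is no real obstacle here; the only thing to be careful about is invoking the monotonicity $2^{\aleph_0} \le 2^\alpha$ for every infinite $\alpha$, which is a standard fact about cardinal arithmetic and requires no choice of a specific model of set theory.

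In short, the proof is a one-line reduction plus the monotonicity remark, and I would present it exactly in that order: state the bound on vertices of degree $\alpha$, then apply Theorem \ref{thm:inf-min-deg}.
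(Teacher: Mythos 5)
Your proof is correct and matches the paper's intent exactly: the corollary is stated there as an immediate special case of Theorem \ref{thm:inf-min-deg}, the only point being the monotonicity bound $|V(G)| \le 2^{\aleph_0} \le 2^\alpha$ for every infinite cardinal $\alpha$, which you verify. Nothing further is needed.
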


\begin{question}
  If $G$ is a graph with infinite minimum degree and order at most $2^{\aleph_0}$, must $D'(G) \le 2$?
\end{question}

\section*{Acknowledgement}
The authors would like to thank Louis DeBiasio for his helpful comments on a draft of the article.

\bibliographystyle{plain}
\bibliography{distinguishing-index}

\end{document}